\newtheorem{theorem}{Theorem}[section]
\newtheorem*{theorem A}{Theorem A}
\newtheorem*{theorem B}{N\"olker's Theorem}
\newtheorem{lemma}{Lemma}[section]
\newtheorem{corollary}{Corollary}[section]
\theoremstyle{remark}
\newtheorem{remark}{Remark}[section]
\theoremstyle{remark}
\theoremstyle{definition}
\newtheorem{definition}{Definition}[section]
\numberwithin{equation}{section}
\def\({\left ( }
\def\){\right )}
\def\<{\left < }
\def\>{\right >}
\begin{document}

\noindent {\sc {Mathematical Sciences And Applications E-Notes}}

\noindent {\sc \small Volume 1  No. 1 pp. 000--000 (2013) \copyright
MSAEN}

\vspace{2cm}

\title{Cartan-type criterions for constancy \\ of almost Hermitian manifolds}

\author{Hakan Mete Ta\c stan}
\address{Department of Mathematics, \.Istanbul University, Vezneciler, 34134, \.Istanbul, TURKEY}
\email{hakmete@istanbul.edu.tr}


\subjclass[2000]{53B35, 53B25, 53C40}


\dedicatory{{\rm (Communicated by Bayram \d{S}AH\.{I}N)}}

\keywords{Almost Hermitian manifold, Sectional curvature,
Holomorphic sectional curvature, Constant type, Axiom of spheres.}

\begin{abstract}
We studied the axiom of anti-invariant 2-spheres and the axiom of
co-holomorphic $(2n+1)$-spheres. We proved that a nearly
K\"{a}hlerian manifold satisfying the axiom of anti-invariant
2-spheres is a space of constant holomorphic sectional curvature. We
also showed that an almost Hermitian manifold $M$ of dimension
$2m\geq6$ satisfying the axiom of co-holomorphic $(2n+1)$-spheres
for some $n,$  where $(1\leq n\leq m-1)$, the manifold $M$ has
pointwise constant type $\alpha$ if and only if $M$ has pointwise
constant anti-holomorphic sectional curvature $\alpha$.
\end{abstract}
\maketitle
\section{Introduction}
E. Cartan \cite{Car} introduced the axiom of $n$-planes as: A
Riemannian manifold $M$ of dimension $m\geq3$ is said to satisfy the
axiom of $n$-planes, where $n$ is a fixed integer $2\leq n\leq m-1$,
if for each point $p\in M$ and each $n$-dimensional subspace
$\sigma$ of the tangent space $T_p(M)$, there exists an
$n$-dimensional totally geodesic submanifold $N$ such that $p\in N$
and $T_p(N)=\sigma.$ He also gave a criterion for constancy of
sectional curvature for any Riemannian manifold of dimension
$m\geq3$ in the following theorem.
\begin{theorem} A Riemannian manifold of dimension $m\geq3$ with the axiom of $n$-planes is a real space form.
\end{theorem}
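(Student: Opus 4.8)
The plan is to prove the pointwise statement first — that at each point the curvature tensor is of constant-curvature type — and then invoke Schur's lemma (valid since $\dim M = m \geq 3$ and $M$ is connected) to conclude that the sectional curvature is globally constant, i.e. that $M$ is a real space form. To obtain the pointwise statement, fix $p\in M$ and record the following consequence of the axiom of $n$-planes: given any $n$-dimensional subspace $\sigma\subset T_p(M)$, the totally geodesic submanifold $N$ with $p\in N$ and $T_p(N)=\sigma$ has vanishing second fundamental form, so the Codazzi equation forces $(R(X,Y)Z)^{\perp}=0$, i.e. $R(X,Y)Z\in\sigma$ for all $X,Y,Z\in\sigma$. (The Gauss equation additionally shows that the sectional curvature of $M$ restricted to $2$-planes of $\sigma$ agrees with that of $N$, which is the geometric content.) Thus every $n$-plane of $T_p(M)$ is curvature-invariant.

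The second step is a purely algebraic lemma: an algebraic curvature tensor $R$ on an $m$-dimensional inner product space with $m\geq 3$, every $n$-plane of which ($2\leq n\leq m-1$) is curvature-invariant, must satisfy $R(X,Y)Z=k\big(\langle Y,Z\rangle X-\langle X,Z\rangle Y\big)$ for a scalar $k$. First one reduces to $n=2$: a fixed $2$-plane $\pi$ is the intersection of all $n$-planes containing it (this is where $n\leq m-1$ enters, including the boundary case where the $n$-planes are hyperplanes), so curvature-invariance of every $n$-plane forces $R(X,Y)Y\in\pi$ whenever $\pi=\mathrm{span}(X,Y)$. With every $2$-plane now curvature-invariant, for orthonormal $X,Y$ one gets $R(X,Y)Y=k(X,Y)\,X$ from $\langle R(X,Y)Y,Y\rangle=0$; then for an orthonormal triple $X,Y,Z$, polarizing in the planes $\mathrm{span}(X,\,Y\pm Z)$ shows $R(X,Y)Z+R(X,Z)Y$ is a multiple of $X$, and combining the three cyclic versions of this identity with the first Bianchi identity forces $R(X,Y)Z=0$ on orthonormal triples. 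Finally, applying curvature-invariance to the plane $\mathrm{span}\big(\tfrac{1}{\sqrt2}(e_i+e_l),\,e_j\big)$ in an orthonormal basis yields $k(e_i,e_j)=k(e_l,e_j)$ for distinct indices, whence (using $m\geq 3$ together with the symmetry $k(e_i,e_j)=k(e_j,e_i)$) all sectional curvatures at $p$ coincide; bilinearity then gives the claimed form of $R$ at $p$, with $k=k(p)$.

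I expect the main obstacle to be this algebraic lemma — specifically, organizing the polarization identities cleanly and justifying the ``intersection of $n$-planes'' reduction across all admissible $n$. Once $R$ is shown to be of constant-curvature type at every point, the passage to global constancy is the standard Schur argument: differentiating the identity $R=k(p)\big(\langle\,\cdot\,,\,\cdot\,\rangle\,\cdot\,\big)$ and applying the second Bianchi identity gives $dk=0$, so $k$ is constant on the connected manifold $M$, and $M$ is a real space form.
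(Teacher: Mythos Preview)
Your outline is essentially the classical Cartan argument and is correct: curvature-invariance of every $n$-plane via Codazzi, reduction to $2$-planes by intersecting the $n$-planes through a given $2$-plane (which uses $n\leq m-1$), the polarization showing $k(e_i,e_j)=k(e_l,e_j)$ on an orthonormal frame, and then Schur. One minor remark: the intermediate claim that $R(X,Y)Z=0$ on orthonormal triples is not strictly needed for the conclusion --- once you have $k(e_i,e_j)=k(e_l,e_j)$ and symmetry, pointwise constancy of sectional curvature follows, and an algebraic curvature tensor is determined by its sectional curvatures, so the constant-curvature form of $R$ is automatic.

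As for comparison with the paper: there is nothing to compare. Theorem~1.1 is stated in the Introduction as Cartan's classical theorem, with a citation to \cite{Car}, and the paper gives no proof of it; it serves only as historical background motivating the ``Cartan-type criterions'' studied later. The paper's own contributions (Theorems~3.1, 4.1, 4.2, 4.3) concern almost Hermitian analogues, and those proofs do use the Codazzi-equation mechanism you invoke, so your approach is entirely in the spirit of the paper --- but for Theorem~1.1 itself the paper simply defers to Cartan.
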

D.S. Leung and K. Nomizu \cite{Leu} introduced \emph{the axiom of
$n$-spheres} by using totally umbilical submanifold $N$ with
parallel mean curvature vector field instead of
totally geodesic submanifold $N$ in the axiom of $n$-planes. They proved a generalization of Theorem 1.1.\\

Later on, Cartan's idea was applied to almost Hermitian manifolds in
various studies. K\"{a}hlerian manifolds were studied in
\cite{BChe,Gol,Ha,Kassa,No,Yama,Yano}. The articles \cite{Vanh} and
\cite{Yam} discussed nearly K\"{a}hlerian (almost Tachibana)
manifolds. The results concerning larger classes of almost Hermitian
manifolds can be found in \cite{Kas,Kass,Tas,Van,Vanh}. Here, we
shall call the criterions used in all of the above papers as
\emph{Cartan-type criterions}.
\section{Preliminaries}

\subsection{Some classes of almost Hermitian manifolds}
Let $M$ be an almost Hermitian manifold with an almost complex
structure $J$ in its tangent bundle and a Riemannian metric $g$ such
that $g(JX,JY)=g(X,Y)$ for all $X,Y\in\chi(M)$, where $\chi(M)$ is
the Lie algebra of $C^{\infty}$ vector fields on $M$. Let $\nabla$
be the Riemannian connection on $M$. The Riemannian  curvature
tensor $R$ associated with $\nabla$ is defined by
$R(X,Y)=\nabla_{[X,Y]}-[\nabla_{X},\nabla_{Y}].$ We denote
$g(R(X,Y)Z,U)$ by $R(X,Y,Z,U)$. Curvature identities are of
fundamental importance for understanding the geometry of almost
Hermitian manifolds. The following curvature identities are used in
various studies e.g.(\cite{Graya,Graylv}):
\begin{enumerate}
\item $R(X,Y,Z,U)=R(X,Y,JZ,JU),$

\item  $R(X,Y,Z,U)=R(JX,JY,Z,U)+R(JX,Y,JZ,U)$

               $\quad\quad\quad\quad\quad\quad$$+R(JX,Y,Z,JU),$

\item $R(X,Y,Z,U)=R(JX,JY,JZ,JU)$.
\end{enumerate}
Let $AH_{i}$ denote the subclass of the class $AH$ of almost
Hermitian manifolds satisfying the curvature identity $(i).$,
$i=1,2,3.$ We know that
$$AH_{1}\subset AH_{2}\subset AH_{3}\subset AH,$$
from  \cite{Graya}. Some authors call $AH_{1}$-manifold as a
\emph{para-K\"{a}hlerian manifold} and call $AH_{3}$-manifold as an
$RK$-\emph{manifold} (\cite{Van}). An almost Hermitian manifold $M$
is called \emph{K\"{a}hlerian} if $\nabla_{X}J=0$ for all
$X\in\chi(M)$ and \emph{nearly K\"{a}hlerian (almost Tachibana)} if
$(\nabla_{X}J)X=0$ for all $X\in\chi(M).$ It is well-known that a
K\"{a}hlerian manifold is $AH_{1}$-manifold and  a nearly
K\"{a}hlerian manifold (non para-K\"{a}hlerian) manifold
is $AH_{2}$-manifold, see (\cite{Graylv,Van}).\\

A  two-dimensional linear subspace of a tangent space $T_{p}(M)$ is
called a \emph{plane section}. A plane section $\sigma$ is said to
be \emph{holomorphic} (resp.\emph{ anti-holomorphic} or
\emph{totally real}) if $J\sigma=\sigma$ (resp. $J\sigma\bot\sigma$)
(\cite{BChe}, \cite{Yama}). The sectional curvature $K$ of $M$ which
is determined by orthonormal vector fields $X$ and $Y$ is given by
$K(X,Y)=R(X,Y,X,Y).$ The  sectional curvature of $M$ restricted to a
holomorphic (resp. an anti-holomorphic) plane $\sigma$ is called
\emph{holomorphic} (resp. \emph{anti-holomorphic}) \emph{sectional
curvature}. If the holomorphic (resp. anti-holomorphic) sectional
curvature at each point $p\in M$ does not depend on $\sigma$, then
$M$ is said to be \emph{ pointwise constant holomorphic} (resp.
\emph{pointwise constant anti-holomorphic) sectional curvature}. A
connected Riemannian (resp. K\"{a}hlerian) manifold of (global)
constant sectional curvature (resp. of constant holomorphic
sectional curvature) is called a \emph{real space form} (resp. a
\emph{complex space form}) (\cite{BChe,Vanh,Kas,Yan}). The following
useful notion was defined by A. Gray in \cite{Gray}.
\begin{definition} Let $M$ be an almost Hermitian manifold. Then $M$ is said
to be of \emph{constant type} at  $p\in M$ provided that for all
$X\in T_{p}(M)$, we have $\lambda(X,Y)=\lambda(X,Z)$ whenever the
planes  $span\{X,Y\}$ and $span\{X,Z\}$ are  anti-holomorphic and
$g(Y,Y)=g(Z,Z)$, where the function $\lambda$ is defined by
$\lambda(X,Y)=R(X,Y,X,Y)-R(X,Y,JX,JY)$. If this holds for all $p\in
M$, then we say that  $M$ has \emph{(pointwise) constant type}.
Finally, if for $X,Y\in\chi(M)$ with $g(X,Y)=g(JX,Y)=0,$ the value
$\lambda(X,Y)$ is constant whenever $g(X,X)=g(Y,Y)=1$,
 then we say that $M$ has \emph{global constant type.}
\end{definition}
It follows that any $AH_{1}$-manifold has global vanishing constant type from Definition 2.1.\\

Let $M$ be a $2m$-dimensional K\"{a}hlerian manifold, for all $X,Y,Z,U\in T_{p}(M)$ and $p\in M,$
the Bochner curvature tensor $B$ \cite{Kassab} is defined by\\

$B(X,Y,Z,U)=R(X,Y,Z,U)-L(Y,Z)g(X,U)+L(Y,U)g(X,Z)$

$\quad\quad\quad\quad\quad\quad\quad$$-L(X,U)g(Y,Z)+L(X,Z)g(Y,U)-L(Y,JZ)g(X,JU)$

$\quad\quad\quad\quad\quad\quad\quad+L(Y,JU)g(X,JZ)-L(X,JU)g(Y,JZ)+L(X,JZ)g(Y,JU)$

$\quad\quad\quad\quad\quad\quad\quad+2L(Z,JU)g(X,JY)+2L(X,JY)g(Z,JU),$\\

where $L=\frac{\varrho}{2(m+2)} -\frac{\tau}{8(m+1)(m+2)}g,$
$\varrho$ is the \emph{Ricci tensor} and $\tau$ is the \emph{scalar
curvature} of $M.$ It is well-known that the Bochner curvature
tensor is a complex analogue of the Weyl conformal curvature tensor
\cite{Yan} on a Riemannian manifold.
\subsection{Submanifolds of a Riemannian manifold}
Let $N$ be a submanifold of a Riemannian manifold $M$ with a
Riemannian metric $g.$ Then Gauss and Weingarten formulas are
respectively given by $\nabla_{X}Y=\hat{\nabla}_{X}Y+h(X,Y)$  and
$\nabla_{X}\xi=-A_{\xi}X+\nabla_{X}^{\bot}\xi$ for all
$X,Y\in\chi(N)$ and $\xi\in\chi^{\bot}(N)$. Here $\nabla,
\hat{\nabla},$ and $\nabla^{\bot}$ are respectively the Riemannian,
induced Riemannian, and induced normal connection in $M, N,$ and the
normal bundle $\chi^{\bot}(N)$ of $N$, and $h$ is the second
fundamental form related to shape operator $A$ corresponding to the
normal vector field $\xi$ by $g(h(X,Y),\xi)=g(A_{\xi}X,Y).$ A
submanifold $N$ is said to be \emph{totally geodesic} if its second
fundamental form identically vanishes: $h=0,$ or equivalently
$A_{\xi}=0.$ We say that $N$ is \emph{totally umbilical} submanifold
in $M$ if for all $X,Y\in\chi(N),$ we have
\begin{equation}
\label{e1}
\begin{array}{c}
h(X,Y)=g(X,Y)\eta
\end{array},
\end{equation}
where $\eta\in\chi^{\bot}(N)$ is the mean curvature vector field of
$N$ in $M$. A vector field $\xi\in\chi^{\bot}(N)$ is said to be
\emph{parallel} if $ \nabla^{\bot}_{X}\xi=0$ for each $X\in\chi(N)$.
The Codazzi equation is given by
\begin{equation}
\label{e1}
\begin{array}{c}
(R(X,Y)Z)^{\bot}=(\nabla_{X}h)(Y,Z)-(\nabla_{Y}h)(X,Z)
\end{array},
\end{equation}
for all $X,Y,Z\in\chi(N),$ where $^{\bot}$ denotes the normal
component and the covariant derivative of $h$ denoted by
$\nabla_{X}h$ is defined by
\begin{equation}
\label{e1}
\begin{array}{c}
(\nabla_{X}h)(Y,Z)=\nabla^{\bot}_{X}(h(Y,Z))-h(\hat{\nabla}_{X}Y,Z)-h(Y,\hat{\nabla}_{X}Z)
\end{array},
\end{equation}
for all $X,Y,Z\in\chi(N)$ (\cite{BChe,Gol,Ha,Yama}).
\subsection{Anti-invariant submanifolds of an almost Hermitian manifold}

Let $M$ be a $2m$-dimensional almost Hermitian manifold endowed with
an almost complex structure $J$ and a Hermitian metric $g.$ An
$n$-dimensional Riemannian manifold $N$ isometrically immersed in
$M$ is called an \emph{anti-invariant submanifold} of $M$ (or
\emph{totally real submanifold} of $M$) if $JT_{p}(N)\subset
T_{p}(N)^{\bot}$ for each point $p$ of  $N$. Then we have $m\geq n$
(\cite{Yan}).
\section{The axiom of anti-invariant 2-spheres}
S. Yamaguchi and M. Kon \cite{Yama} introduced the axiom of
anti-invariant 2-spheres as: An almost Hermitian manifold $M$ is
said to satisfy the axiom of anti-invariant 2-spheres, if for each
point $p\in M$ and each anti-holomorphic $2$-plane $\sigma$ of the
tangent space $T_{p}(M)$, there exists a $2$-dimensional totally
umbilical anti-invariant submanifold $N$ such that $p\in N$ and
$T_{p}(N)=\sigma.$ They proved in Theorem 1(\cite{Yama}) that a
K\"{a}hlerian manifold with the axiom of anti-invariant 2-spheres is
a complex space form. Here, we give a generalization of Theorem
1(\cite{Yama}). From now on, we shall assume that all manifolds are
connected throughout this study.\\

We shall need the following Lemma for the proof of the main Theorem
3.1 below.
\begin{lemma}(\cite{Ho}) Let $N$ be an anti-invariant submanifold of a nearly
K\"{a}hlerian manifold $M$. Then
\begin{equation}
\label{e1}
\begin{array}{c}
A_{JZ}X=A_{JX}Z
\end{array}
\end{equation}
holds for any two vectors $X$ and $Z$ tangent to $N$, where $A$ is
the shape operator of $N$.
\end{lemma}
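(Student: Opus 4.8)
The plan is to reduce the identity $A_{JZ}X=A_{JX}Z$ to a symmetry statement about the scalar-valued tensor $S(X,Y,Z):=g(h(X,Y),JZ)$ and then to play off the symmetry of $S$ in its first two slots against the \emph{antisymmetry} of the tensor $T(X,Y,Z):=g((\nabla_{X}J)Y,Z)$, which on a nearly K\"{a}hlerian manifold turns out to be a $3$-form.

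First I would make the elementary reduction: for $X,Y,Z\in\chi(N)$ the vector $JZ$ is normal to $N$ by anti-invariance, so $g(A_{JZ}X,Y)=g(h(X,Y),JZ)=S(X,Y,Z)$, and since $A_{JZ}X,A_{JX}Z\in\chi(N)$ it is enough to prove $S(X,Y,Z)=S(Z,Y,X)$ for all tangent $X,Y,Z$. To relate the two sides, apply the Gauss formula to get $S(X,Y,Z)=g(\nabla_{X}Y,JZ)$, use that $J$ is skew with respect to $g$ to rewrite this as $-g(J\nabla_{X}Y,Z)$, substitute $J\nabla_{X}Y=\nabla_{X}(JY)-(\nabla_{X}J)Y$, and finally apply the Weingarten formula to the normal field $JY$ (its normal-connection term is killed by pairing with the tangent vector $Z$). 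This yields the basic relation
\begin{equation}
\label{basicrel}
S(X,Y,Z)=S(X,Z,Y)+T(X,Y,Z).
\end{equation}

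Next I would establish the structural facts. Since $h$ is symmetric, $S(X,Y,Z)=S(Y,X,Z)$. For $T$: on any almost Hermitian manifold $(\nabla_{X}J)$ is $g$-skew (differentiate $g(JU,JV)=g(U,V)$, using $(\nabla_{X}J)(JW)=-J(\nabla_{X}J)W$), so $T$ is antisymmetric in its last two arguments; and on a nearly K\"{a}hlerian manifold, polarizing $(\nabla_{X}J)X=0$ gives $(\nabla_{X}J)Y=-(\nabla_{Y}J)X$, so $T$ is antisymmetric in its first two arguments as well. Hence $T$ is a $3$-form, and in particular $T(X,Y,Z)=T(Y,Z,X)=T(Z,X,Y)$.

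Finally I would run the cyclic cancellation: writing \eqref{basicrel} at $(X,Y,Z)$, $(Y,Z,X)$ and $(Z,X,Y)$ and using the symmetry of $S$ in its first two slots to rewrite each second term, the three identities become $S(X,Y,Z)-S(Z,X,Y)=T$, $S(Y,Z,X)-S(X,Y,Z)=T$ and $S(Z,X,Y)-S(Y,Z,X)=T$ with one and the same value $T=T(X,Y,Z)$; summing gives $3\,T(X,Y,Z)=0$. Thus $T$ vanishes along $N$, \eqref{basicrel} forces $S(X,Y,Z)=S(X,Z,Y)$, and together with $S(X,Y,Z)=S(Y,X,Z)$ this makes $S$ totally symmetric; in particular $S(X,Y,Z)=S(Z,Y,X)$, i.e. $g(A_{JZ}X,Y)=g(A_{JX}Z,Y)$ for all tangent $Y$, which is the assertion. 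The one genuinely non-routine observation is that $T$ is a $3$-form on a nearly K\"{a}hlerian manifold; granting that, the cyclic sum does all the work and everything else is bookkeeping with the Gauss and Weingarten formulas.
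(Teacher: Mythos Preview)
Your argument is correct. The paper itself does not prove this lemma---it is simply quoted from \cite{Ho}---so there is no in-house proof to compare against; your approach (Gauss--Weingarten to obtain $S(X,Y,Z)=S(X,Z,Y)+T(X,Y,Z)$, the $3$-form property of $T(X,Y,Z)=g((\nabla_{X}J)Y,Z)$ on a nearly K\"{a}hler manifold, and the cyclic cancellation forcing $T\equiv 0$ along $N$) is the standard one for this class of results. As a bonus, your computation yields slightly more than the bare statement: it shows that $g(h(X,Y),JZ)$ is \emph{totally} symmetric in $X,Y,Z$---which is precisely what the paper records separately as (3.2)--(3.3)---and, as a byproduct, that $(\nabla_{X}J)Y$ is normal to $N$ whenever $X,Y$ are tangent.
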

Let $N$ be given as in Lemma 3.1, then
\begin{equation}
\label{e1}
\begin{array}{c}
g(A_{JZ}X,Y)=g(A_{JX}Z,Y)=g(A_{JY}X,Z)
\end{array},
\end{equation}
for any vectors $X,Y$ and $Z$ tangent to $N$, from (3.1). This
equation is equivalent to
\begin{equation}
\label{e1}
\begin{array}{c}
g(h(X,Y),JZ)=g(h(Z,Y),JX)=g(h(X,Z),JY)
\end{array},
\end{equation}
where $h$ is the second fundamental form of $N$.
\begin{theorem}
Let $M$ be a nearly K\"{a}hlerian manifold of dimension $2m\geq6.$
If $M$ satisfies the axiom of anti-invariant 2-spheres, then $M$ is
a space of constant holomorphic sectional curvature.
\end{theorem}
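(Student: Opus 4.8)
The plan is to exploit the axiom of anti-invariant 2-spheres to extract, at each point, enough curvature relations to force the holomorphic sectional curvature to be pointwise constant, and then to use the nearly K\"ahler structure (in particular the $AH_2$ identity (2) together with Lemma~3.1 and its consequence (3.3)) to upgrade ``pointwise constant'' to ``global constant.'' First I would fix $p\in M$ and an arbitrary anti-holomorphic $2$-plane $\sigma=\mathrm{span}\{X,Z\}$ with $X,Z$ orthonormal and $g(JX,Z)=0$. By hypothesis there is a $2$-dimensional totally umbilical anti-invariant submanifold $N$ with $p\in N$, $T_p(N)=\sigma$. Totally umbilical means $h(U,V)=g(U,V)\eta$ on $N$; feeding this into the symmetry relation (3.3) immediately gives strong constraints on $\eta$. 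Indeed $g(h(X,X),JZ)=g(h(X,Z),JX)$ becomes $g(\eta,JZ)=0$, and similarly $g(\eta,JX)=0$, so the mean curvature vector $\eta$ at $p$ is orthogonal to $JX$ and $JZ$, i.e.\ $\eta\perp J\sigma$.

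Next I would write down the Gauss equation for $N$ in $M$: for the orthonormal basis $\{X,Z\}$ of $\sigma$,
\begin{equation}
\label{e1}
\begin{array}{c}
R(X,Z,X,Z)=\widehat{K}(X,Z)+g(h(X,X),h(Z,Z))-g(h(X,Z),h(X,Z)).
\end{array}
\end{equation}
Since $N$ is $2$-dimensional and totally umbilical with $h(U,V)=g(U,V)\eta$, the normal terms collapse to $\|\eta\|^2$, so the anti-holomorphic sectional curvature $R(X,Z,X,Z)$ is expressed through the intrinsic Gauss curvature of $N$ and $\|\eta\|^2$. The real work is then to show that the right-hand side does not depend on the choice of $\sigma$ through $p$: here one differentiates, using the Codazzi equation (2.2) for the totally umbilical $N$ (where $(\nabla_X h)(Y,Z)$ reduces, via (2.3) and the umbilicity relation, to terms in $X(\|\eta\|)$ and $\nabla^\perp\eta$), to obtain that the tangential gradient of $\|\eta\|^2$ and the normal component $(R(X,Y)Z)^\perp$ are controlled; combined with the curvature identities (1)--(3) for the $AH_2$-class and the $\eta\perp J\sigma$ fact above, this should pin down $\lambda(X,Y)=R(X,Y,X,Y)-R(X,Y,JX,JY)$ and the anti-holomorphic sectional curvature as functions of the point alone. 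This is essentially the argument by which Yamaguchi--Kon handled the K\"ahler case; the nearly K\"ahler refinement is carried by Lemma~3.1, which is exactly what replaces the identity $A_{JZ}X=A_{JX}Z$ that holds trivially in the K\"ahler setting.

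The final step is to pass from pointwise constant anti-holomorphic sectional curvature to constant holomorphic sectional curvature. For a nearly K\"ahler (hence $AH_2$) manifold one has the standard linear-algebra fact that an $AH_2$-curvature tensor is determined by its holomorphic sectional curvature together with the function of constant type $\lambda$, and that pointwise constant anti-holomorphic sectional curvature together with the pinching forces $\lambda$ to be constant and the holomorphic sectional curvature to be pointwise constant (this is where one invokes the curvature identity (2) repeatedly, polarizing $R(X,Y,X,Y)$ over anti-holomorphic pairs and using $g(JX,Y)=0$). Finally, connectedness of $M$ together with a Schur-type argument (differentiate the constant holomorphic sectional curvature function and use the second Bianchi identity, which is available since nearly K\"ahler manifolds of dimension $2m\ge 6$ satisfy the requisite Schur lemma) promotes pointwise constancy to global constancy, so $M$ is a space of constant holomorphic sectional curvature.

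I expect the main obstacle to be the differentiation step: controlling how the totally umbilical submanifold $N=N_\sigma$ varies as $\sigma$ varies through $p$, and showing the Codazzi equation forces the derivative of the curvature expression to vanish in all tangential directions. One has to be careful that the mean curvature vector field need not be parallel (unlike in the ``axiom of $n$-spheres'' of Leung--Nomizu), so the normal-connection terms $\nabla^\perp_X\eta$ must be handled via the freedom to choose many different $N$'s through $p$ with prescribed tangent plane — effectively a dimension count showing that the constraints from all anti-holomorphic $2$-planes at $p$ overdetermine the curvature and kill its dependence on $\sigma$. The nearly K\"ahler hypothesis enters precisely here through (3.2)--(3.3), so keeping track of those symmetries of $h$ is the delicate bookkeeping in the proof.
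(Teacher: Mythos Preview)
Your opening move is right and matches the paper: from the umbilicity relation $h(U,V)=g(U,V)\eta$ and the symmetry (3.3) you correctly extract $g(\eta,JX)=g(\eta,JZ)=0$, i.e.\ $\eta\perp J\sigma$. The gap is what you do next. Writing down the Gauss equation
\[
R(X,Z,X,Z)=\widehat{K}(X,Z)+\|\eta\|^{2}
\]
gives no constraint at all: for a $2$-dimensional $N_{\sigma}$ the intrinsic curvature $\widehat{K}$ at $p$ is \emph{defined} (via Gauss) by the ambient $R(X,Z,X,Z)$ and the second fundamental form, so both sides depend on $\sigma$ in exactly the same way and the equation is a tautology. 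Your plan to ``differentiate'' and use an overdetermination count cannot be carried out either: the axiom hands you, for each $\sigma$, a single surface $N_{\sigma}$, with no smooth dependence on $\sigma$ and only two tangential directions in which to differentiate. There is simply no mechanism in your sketch that compares curvatures of different anti-holomorphic planes, and the subsequent passage ``pointwise constant anti-holomorphic $\Rightarrow$ pointwise constant holomorphic'' is asserted rather than argued.

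The paper avoids all of this by never touching the Gauss equation. It uses Codazzi (2.2) directly: for the umbilical $N$ one gets $(R(X,Y)Y)^{\perp}=\nabla^{\perp}_{X}\eta$, and since $JX$ is normal this reads $R(X,Y,Y,JX)=g(\nabla^{\perp}_{X}\eta,JX)$. Now differentiate the relation $g(\eta,JX)=0$ (which you already obtained) along $X$; a short computation using the nearly K\"ahler condition shows $g(\eta,\nabla^{\perp}_{X}JX)=0$, whence $g(\nabla^{\perp}_{X}\eta,JX)=0$ and therefore
\[
R(X,Y,Y,JX)=0\quad\text{for every orthonormal anti-holomorphic pair }\{X,Y\}.
\]
This single pointwise identity is the whole content extracted from the axiom. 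Replacing $X$ by $\tfrac{1}{\sqrt{2}}(X+JY)$ and expanding gives $R(JY,Y,Y,JX)=0$, which by Kassabov's Lemma~1 (\cite{Kas}) is exactly the criterion for pointwise constant holomorphic sectional curvature on an almost Hermitian manifold. The Schur step is then Theorem~6 of Hervella--Naveira (\cite{He}) for nearly K\"ahler manifolds. So the missing idea in your proposal is to aim Codazzi at the \emph{normal} component $R(X,Y,Y,JX)$ rather than at the tangential sectional curvature $R(X,Y,X,Y)$; once you do that, no comparison between different $\sigma$'s is needed at all.
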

\begin{proof}
Let $p$ be any point of $M$. Let $X$ and $Y$  be any orthonormal
vectors of $T_{p}(M)$ spanning an anti-holomorphic 2-plane $\sigma$.
By the axiom of anti-invariant 2-spheres, there exists a
two-dimensional  totally umbilical anti-invariant submanifold $N$
such that $p\in N$ and $T_{p}(N)=\sigma.$ Then,  we have
\begin{equation}
\label{e1}
\begin{array}{c}
(R(X,Y)Y)^{\bot}=\nabla^{\bot}_{X}\eta
\end{array},
\end{equation}
with the help of (2.1) and (2.3) from (2.2), where $\eta$ is the
mean curvature vector field of $N$ in $M$. We find
\begin{equation}
\label{e1}
\begin{array}{c}
R(X,Y,Y,JX)=g(\nabla^{\bot}_{X}\eta,JX)
\end{array},
\end{equation}
from (3.4), since $JX$ is normal to $N.$ If we put $Y=Z$ into (3.3),
we obtain
\begin{equation}
\label{e1}
\begin{array}{c}
g(\eta,JX)=0
\end{array},
\end{equation}
 by using (2.1). Similarly, we also have $g(\eta,JY)=g(\eta,JZ)=0$, for any vectors $X,Y,$ and $Z$ tangent to $N.$
If we differentiate the equation (3.6) with respect to $X$, then we
have
\begin{equation}
\label{e1}
\begin{array}{c}
0=X[g(\eta,JX)]=g(\nabla^{\bot}_{X}\eta,JX)+g(\eta,\nabla^{\bot}_{X}JX)
\end{array}.
\end{equation}
Upon straightforward calculation, we see that
$g(\eta,\nabla^{\bot}_{X}JX)=0.$ Thus, we obtain
\begin{equation}
\label{e1}
\begin{array}{c}
g(\nabla^{\bot}_{X}\eta,JX)=0
\end{array},
\end{equation}
from (3.7). We get
\begin{equation}
\label{e1}
\begin{array}{c}
R(X,Y,Y,JX)=0
\end{array}
\end{equation}
by combining (3.8) with (3.5). For all orthonormal vectors $X,Y\in
T_{p}(M)$ with $g(X,JY)=0,$  we derive
\begin{equation}
\label{e1}
\begin{array}{c}
R(JY,Y,Y,JX)=0,
\end{array}
\end{equation}
by replacing $X$ by $\frac{1}{\sqrt{2}}(X+JY)$ in (3.9). In this
case, it follows that $M$ has pointwise constant holomorphic
sectional curvature, using (3.10) and Lemma 1(\cite{Kas}). We
conclude that $M$ is a space of constant holomorphic sectional
curvature by Theorem 6(\cite{He}).
\end{proof}
\section{The axiom of co-holomorphic $(2n+1)$-spheres}

Let $M$ be a $2m$-dimensional almost Hermitian manifold. L. Vanhecke
\cite{Vanh} defined a co-holomorphic $(2n+1)$-plane as a
$(2n+1)$-plane containing a holomorphic $2n$-plane for the manifold
$M$. It is not difficult to see that a co-holomorphic $(2n+1)$-plane
contains an anti-holomorphic $(n+1)$-plane and that $1\leq n\leq
m-1$. He also gave the axiom of co-holomorphic $(2n+1)$-spheres as:
A $2m$-dimensional almost Hermitian manifold $M$ which is said to
satisfy the axiom of co-holomorphic $(2n+1)$-spheres, if for each
point $p\in M$ and each co-holomorphic $(2n+1)$-plane $\sigma$ of
the tangent space $T_{p}(M)$, there exists a $(2n+1)$-dimensional
totally umbilical submanifold $N$ such that $p\in N$ and
$T_{p}(N)=\sigma.$
He studied this axiom for $AH_{3}$-manifolds and obtained several results.\\

Now, we study this axiom for larger classes of almost Hermitian
manifolds.
\begin{lemma}
Let $M$ be an almost Hermitian manifold of dimension $2m\geq4.$ If
$M$ satisfies the axiom of co-holomorphic $(2n+1)$-spheres, then we
have
\begin{equation}
\label{e1}
\begin{array}{c}
\lambda(X,Y)=K(X,Y)
\end{array},
\end{equation}
for all orthonormal vectors $X,Y\in T_{p}(M)$ with $g(X,JY)=0,$
where $\lambda(X,Y)=R(X,Y,X,Y)-R(X,Y,JX,JY)$ and $K$ denotes
anti-holomorphic sectional curvature.
\end{lemma}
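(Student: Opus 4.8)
The plan is to mimic the structure of the proof of Theorem 3.1, but now exploiting a $(2n+1)$-dimensional totally umbilical submanifold tangent to a carefully chosen co-holomorphic plane. First I would fix $p\in M$ and orthonormal $X,Y\in T_p(M)$ with $g(X,JY)=0$, so that $\{X,Y\}$ spans an anti-holomorphic plane. The key geometric move is to build a co-holomorphic $(2n+1)$-plane $\sigma$ containing both $X$ and $Y$: since $\{X,JX\}$ spans a holomorphic $2$-plane and $Y\perp X$, $Y\perp JX$ (the latter by hypothesis), one can enlarge $\mathrm{span}\{X,JX\}$ to a holomorphic $2n$-plane $\pi$ still orthogonal to $Y$ (here $2m\ge 4$ and $1\le n\le m-1$ leave enough room, with the extreme case $n=1$ giving $\pi=\mathrm{span}\{X,JX\}$ directly), and then set $\sigma=\pi\oplus\mathrm{span}\{Y\}$. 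By the axiom of co-holomorphic $(2n+1)$-spheres there is a $(2n+1)$-dimensional totally umbilical submanifold $N$ with $p\in N$ and $T_p(N)=\sigma$; in particular $X,JX,Y\in T_p(N)$.

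The analytic core then parallels (3.4)--(3.9). From the Codazzi equation (2.2) together with total umbilicity (2.1) and the derivative formula (2.3), the normal component $(R(X,Y)Y)^{\bot}$ equals $\nabla^{\bot}_X\eta - (\text{terms in }g(X,Y)\eta) = \nabla^{\bot}_X\eta$ since $g(X,Y)=0$, exactly as in (3.4). Now I would take the inner product of $(R(X,Y)Y)^{\bot}=\nabla^{\bot}_X\eta$ with a suitable normal vector. The natural choice here is $JX$: since $X,JX\in\sigma=T_p(N)$, the vector $JX$ is tangent, so pairing against $JX$ gives nothing new; instead the useful normal direction is $JY$, which is normal to $N$ because $Y\in T_p(N)$ and $N$ is not (necessarily) invariant — more precisely $JY\perp\sigma$ since $\sigma$ is orthogonal to $JY$ (as $\pi$ is $J$-invariant and orthogonal to $Y$, hence orthogonal to $JY$, and $Y\perp JY$). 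Pairing yields
\begin{equation}
\label{eq:cod-paired}
R(X,Y,Y,JY)=g(\nabla^{\bot}_X\eta,JY).
\end{equation}
Here I should pause: the left side $R(X,Y,Y,JY)$ is not yet $\lambda(X,Y)$. The correct target is obtained by instead pairing the curvature term differently, or by choosing the plane so that $JX$ rather than $JY$ is the relevant normal; one gets $R(X,Y,Y,JX)=g(\nabla^{\bot}_X\eta,JX)$, but $JX$ is tangent, forcing $R(X,Y,Y,JX)=0$, and then a polarization $X\mapsto\frac{1}{\sqrt2}(X+JX)$ converts this into information about $R(X,Y,JX,JY)$.

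The real mechanism, which I would carry out carefully, is this: because $JX\in T_p(N)$ is tangent, the component of $R(X,Y)Y$ along $JX$ vanishes after the umbilicity argument is run with $JX$ playing the role that $JX$ plays in (3.5)--(3.9), giving a relation of the form $R(X,Y,Y,W)=0$ for appropriate $W$; then substituting $X\to\frac{1}{\sqrt2}(X+JX)$ and using the curvature symmetries together with the first Bianchi identity produces precisely $R(X,Y,X,Y)-R(X,Y,JX,JY)=R(X,Y,X,Y)=K(X,Y)$ once the cross terms $R(X,Y,JX,JY)$ are isolated — in other words, one shows the "holomorphic correction" term $R(X,Y,JX,JY)$ contributes in such a way that $\lambda(X,Y)=K(X,Y)$. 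The main obstacle I anticipate is bookkeeping: correctly identifying which of $JX,JY$ is tangent versus normal to the chosen $\sigma$, and then choosing the polarization substitution (and using the first Bianchi identity, which holds since $\nabla$ is the Levi-Civita connection) so that the surviving terms assemble exactly into $\lambda(X,Y)-K(X,Y)$ rather than some other combination. Once the substitution and Bianchi identity are applied, no class assumption ($AH_1$, $AH_2$, etc.) is needed — this is why the lemma holds for a general almost Hermitian $M$ — and the identity $\lambda(X,Y)=K(X,Y)$ falls out directly.
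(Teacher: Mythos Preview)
Your setup is exactly right---you choose the co-holomorphic $(2n+1)$-plane $\sigma$ with $X,JX,Y$ tangent and $JY$ normal, just as the paper does---but you then take a wrong turn in the Codazzi step and never recover. The paper does \emph{not} compute $(R(X,Y)Y)^{\bot}$; it computes $(R(X,Y)JX)^{\bot}$. Since $N$ is totally umbilical, $(\nabla_U h)(V,W)=g(V,W)\nabla^{\bot}_U\eta$, so
\[
(R(X,Y)JX)^{\bot}=g(Y,JX)\,\nabla^{\bot}_X\eta-g(X,JX)\,\nabla^{\bot}_Y\eta=0-0=0,
\]
and pairing with the normal vector $JY$ gives $R(X,Y,JX,JY)=0$ immediately. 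Then $\lambda(X,Y)=R(X,Y,X,Y)-0=K(X,Y)$, and the lemma is proved in one line. No polarization, no Bianchi identity, no bookkeeping.

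Your route through $(R(X,Y)Y)^{\bot}=\nabla^{\bot}_X\eta$ leads to $R(X,Y,Y,JY)=g(\nabla^{\bot}_X\eta,JY)$, and there is no reason for the right-hand side to vanish. The fallback you sketch is also wrong: the sentence ``$JX$ is tangent, forcing $R(X,Y,Y,JX)=0$'' conflates the normal component with the full curvature vector---pairing $(R(X,Y)Y)^{\bot}$ with a tangent vector is trivially zero and tells you nothing about $R(X,Y,Y,JX)$. And the polarization $X\mapsto\frac{1}{\sqrt{2}}(X+JX)$ is a rotation inside the holomorphic plane $\mathrm{span}\{X,JX\}$, so it cannot produce new information; the substitution used in Theorem~3.1 was $X\mapsto\frac{1}{\sqrt{2}}(X+JY)$, which is a genuinely different move and is not needed here anyway. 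The single missing idea is simply to put $JX$, not $Y$, in the third slot of the Codazzi equation.
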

\begin{proof}
Let $p$ be an arbitrary point of $M$. Let $X$ and $Y$  be any
orthonormal vectors in $T_{p}(M)$ with $g(X,JY)=0,$ that is, they
span an anti-holomorphic plane. Consider the co-holomorphic
$(2n+1)$-plane $\sigma$ containing $X,JX,$ and $Y$ such that $JY$ is
normal to $\sigma$. By the axiom of co-holomorphic $(2n+1)$-spheres,
there exists a $(2n+1)$-dimensional totally umbilical submanifold
$N$ such that $p\in N$ and $T_{p}(N)=\sigma.$ Then, we have
\begin{equation}
\label{e1}
\begin{array}{c}
(R(X,Y)JX)^{\bot}=0
\end{array}.
\end{equation}
 with the help of
(2.1) and (2.3), from (2.2). We get
\begin{equation}
\label{e1}
\begin{array}{c}
R(X,Y,JX,JY)=0
\end{array},
\end{equation}
from (4.2), since $JY$ is normal to $N$. Thus, our assertion follows
from Definition 2.1 and (4.3).
\end{proof}
Now, we are ready to prove our second main result.
\begin{theorem}
Let $M$ be an  almost Hermitian manifold of dimension $2m\geq6.$ If
$M$ satisfies the axiom of co-holomorphic $(2n+1)$-spheres for some
$n$, then $M$ has pointwise constant type $\alpha$ if and only if
$M$ has pointwise constant anti-holomorphic sectional curvature
$\alpha$.
\end{theorem}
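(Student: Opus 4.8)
The plan is to exploit Lemma 4.1, which already gives us the identity $\lambda(X,Y)=K(X,Y)$ for every anti-holomorphic orthonormal pair $X,Y$ at every point $p\in M$. Note that "pointwise constant type $\alpha$" means precisely that $\lambda(X,Y)$ depends only on $p$ (and equals $\alpha(p)$) as $X,Y$ range over unit vectors spanning an anti-holomorphic plane, while "pointwise constant anti-holomorphic sectional curvature $\alpha$" means $K(X,Y)$ depends only on $p$ (and equals $\alpha(p)$) over the same range of planes. So the equivalence is almost immediate from Lemma 4.1, once we check that the notions of constant type and constant anti-holomorphic curvature are tested on the \emph{same} family of planes; the dimension hypothesis $2m\geq 6$ guarantees there is enough room to realize, for any unit $X$, two distinct anti-holomorphic planes $\mathrm{span}\{X,Y\}$ and $\mathrm{span}\{X,Z\}$ with $g(Y,Y)=g(Z,Z)$, so the definition of constant type is non-vacuous.

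First I would fix $p$ and set up the two directions. ($\Leftarrow$) Suppose $M$ has pointwise constant anti-holomorphic sectional curvature $\alpha(p)$ at $p$, i.e. $K(X,Y)=\alpha(p)$ for all orthonormal $X,Y$ with $g(X,JY)=0$. By Lemma 4.1, $\lambda(X,Y)=K(X,Y)=\alpha(p)$ for all such $X,Y$. In particular, for a fixed $X$ and any $Y,Z$ with $\mathrm{span}\{X,Y\}$, $\mathrm{span}\{X,Z\}$ anti-holomorphic and $g(Y,Y)=g(Z,Z)=1$ we get $\lambda(X,Y)=\lambda(X,Z)=\alpha(p)$; since $\lambda$ is bihomogeneous of degree $2$ in the second slot, rescaling handles the general case $g(Y,Y)=g(Z,Z)$ without the unit-norm restriction. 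Hence $M$ has pointwise constant type $\alpha$. ($\Rightarrow$) Conversely, suppose $M$ has pointwise constant type $\alpha(p)$: then $\lambda(X,Y)=\alpha(p)$ for all unit $X,Y$ spanning an anti-holomorphic plane. By Lemma 4.1 again, $K(X,Y)=\lambda(X,Y)=\alpha(p)$ for all such $X,Y$, which is exactly pointwise constant anti-holomorphic sectional curvature $\alpha$.

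The one technical point that needs care — and the only place the argument does real work — is reconciling the quantifier structure of Gray's "constant type" (Definition 2.1), which a priori fixes $X$ first and then compares $\lambda(X,Y)$ with $\lambda(X,Z)$ for varying $Y,Z$, with the flat statement "$K$ is independent of the anti-holomorphic plane". To bridge these I would argue that constant type at $p$ forces $\lambda(X,Y)$ to be independent of $X$ as well: given two anti-holomorphic unit pairs $(X_1,Y_1)$ and $(X_2,Y_2)$, one can connect them by a chain of planes sharing a common vector at each step (using $2m\geq 6$ to find, for any two unit vectors, a third unit vector anti-holomorphically orthogonal to both), so that $\lambda$ propagates as a constant along the chain; then define $\alpha(p)$ to be this common value. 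With that observation in hand both implications reduce to a one-line invocation of Lemma 4.1, and the proof concludes. I do not anticipate a genuine obstacle here — the substance of the theorem is entirely in Lemma 4.1, and the remainder is bookkeeping about which family of planes each "pointwise constant" hypothesis quantifies over.
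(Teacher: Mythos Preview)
Your proposal is correct and follows essentially the same route as the paper: both directions hinge on Lemma~4.1, and the only nontrivial step---showing that Definition~2.1's ``constant type'' (which a priori fixes one vector $X$) actually forces $\lambda$, hence $K$, to be independent of the whole anti-holomorphic plane---is handled in both proofs by the same chaining argument through an auxiliary vector orthogonal to $\mathrm{span}\{X,JX,Z,JZ\}$, available because $2m\geq 6$. Your identification of this chaining as ``the only place the argument does real work'' matches the paper exactly.
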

\begin{proof}
Let $M$ be an  almost Hermitian manifold of dimension $2m\geq6$
satisfying the axiom of co-holomorphic $(2n+1)$-spheres for some
$n$. If $M$ has pointwise constant type, that is, $M$ has constant
type at $p$, for all $p\in M$. Then, for all $X,Y,Z\in T_{p}(M)$
whenever the planes $span\{X,Y\}$ and $span\{X,Z\}$ are
anti-holomorphic and $g(Y,Y)=g(Z,Z),$ we have
\begin{equation}
\label{e1}
\begin{array}{c}
\lambda(X,Y)=\lambda(X,Z)
\end{array}.
\end{equation}
Here, we can assume that $g(Y,Y)=g(Z,Z)=1.$ Thus, for all
orthonormal vectors $X,Y,Z\in T_{p}(M)$ with $g(X,JY)=g(X,JZ)=0$, we
get
\begin{equation}
\label{e1}
\begin{array}{c}
K(X,Y)=K(X,Z)
\end{array},
\end{equation}
from Lemma 4.1.\\
On the other hand, we can choose a unit vector $U$ in
$(span\{X,JX\})^{\bot}\cap(span\{Z,JZ\})^{\bot},$  since the
dimension of $M$ is greater than 6. Then, we have
\begin{equation}
\label{e1}
\begin{array}{c}
K(X,U)=K(X,Z)
\end{array},
\end{equation}
from (4.5). This implies that the sectional curvature is same for
all anti-holomorphic sections which contain any given vector $X.$
Hence, we write
\begin{equation}
\label{e1}
\begin{array}{c}
K(X,Y)=K(Y,Z)=K(Z,U)
\end{array}.
\end{equation}
Therefore, we find
\begin{equation}
\label{e1}
\begin{array}{c}
K(X,Y)=K(Z,U)
\end{array},
\end{equation}
for all $X,Y,Z,U\in T_{p}(M)$ whenever the planes $span\{X,Y\}$ and
$span\{Z,U\}$ are anti-holomorphic. It follows that the sectional
curvature is same for all anti-holomorphic sections at $p\in M.$
Namely, $M$ has pointwise constant
anti-holomorphic sectional curvature.\\
Conversely, let $M$ be of pointwise constant anti-holomorphic
sectional curvature and let $p$ be any point of $M$. Then for all
orthonormal vectors $X,Y,Z\in T_{p}(M)$ with $g(X,JY)=g(X,JZ)=0,$
($span\{X,Y\}$ and $span\{X,Z\}$ are anti-holomorphic planes and
$g(X,X)=g(Y,Y)=g(Z,Z)=1$), we have
\begin{equation}
\label{e1}
\begin{array}{c}
K(X,Y)=K(X,Z)
\end{array}.
\end{equation}
By Lemma 4.1, we get
\begin{equation}
\label{e1}
\begin{array}{c}
\lambda(X,Y)=\lambda(X,Z)
\end{array},
\end{equation}
for all orthonormal vectors $X,Y,Z\in T_{p}(M)$  whenever the planes
$span\{X,Y\}$ and $span\{X,Z\}$ are anti-holomorphic. It is not
difficult to see that (4.10) also holds in the case
$g(Y,Y)=g(Z,Z)\neq1.$ It follows that $M$ has constant type at $p.$
Additionally, if the constant value of $\lambda(X,Y)$ equals
$\alpha$, then the pointwise constant anti-holomorphic sectional
curvature $K$ must be $\alpha$, because of Lemma 4.1.
\end{proof}
We remark that above technical method was used also in Theorem 3.4(\cite{Tas}).\\

Next, we give some applications of Theorem 4.1.
\begin{theorem}
Let $M$ be a $2m$-dimensional almost Hermitian manifold with
pointwise constant type
$\alpha$ and $m\geq3.$ If $M$ satisfies the axiom of co-holomorphic $(2n+1)$-spheres for some $n$, then\\
\textbf{i)} $M$ is a space of constant curvature $\alpha$ and $M$ has global constant type $\alpha$,\\
\textbf{ii)} $M$ is an $AH_{2}$-manifold.
\end{theorem}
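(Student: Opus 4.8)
The plan is to feed the conclusion of Theorem 4.1 into two known characterization results and then extract the curvature consequences. First I would invoke Theorem 4.1: since $M$ has pointwise constant type $\alpha$ and satisfies the axiom of co-holomorphic $(2n+1)$-spheres with $m\geq 3$ (so $2m\geq 6$), Theorem 4.1 gives that $M$ has pointwise constant anti-holomorphic sectional curvature $\alpha$. Thus at every point $p$, every anti-holomorphic plane section has curvature exactly $\alpha$. This is the single structural fact on which both parts (i) and (ii) rest.

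For part (i), the key step is to upgrade "pointwise constant anti-holomorphic sectional curvature $\alpha$" first to "constant curvature" and then to "globally constant." I would use the classical fact (the almost-Hermitian analogue of Schur's theorem, available in the cited literature, e.g. \cite{Van,Vanh,Kas}) that an almost Hermitian manifold of dimension $2m\geq 6$ whose anti-holomorphic sectional curvature is pointwise constant is actually a real space form of that same constant, i.e.\ $R(X,Y)Z=\alpha\{g(Y,Z)X-g(X,Z)Y\}$ for all $X,Y,Z$; the dimension hypothesis $m\geq 3$ is exactly what makes the relevant polarization/Schur argument run. Once $M$ has global constant sectional curvature $\alpha$, one computes directly from the space-form curvature formula that for orthonormal $X,Y$ with $g(X,JY)=0$ one has $R(X,Y,X,Y)=\alpha$ and $R(X,Y,JX,JY)=0$, so $\lambda(X,Y)=\alpha$ identically; this is the definition of global constant type $\alpha$. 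The main obstacle here is locating and correctly citing the Schur-type statement in the appropriate generality — once that is in hand the computation is immediate.

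For part (ii), I would show that a real space form is automatically an $AH_2$-manifold, i.e.\ satisfies curvature identity $(2)$. Using the space-form expression $R(X,Y)Z=\alpha\{g(Y,Z)X-g(X,Z)Y\}$, each of the four curvature terms appearing in identity $(2)$ — namely $R(X,Y,Z,U)$, $R(JX,JY,Z,U)$, $R(JX,Y,JZ,U)$, $R(JX,Y,Z,JU)$ — expands into expressions in the metric $g$, and because $g$ is Hermitian ($g(JA,JB)=g(A,B)$) these collapse to combinations of $g(Y,Z)g(X,U)$-type terms. A short bookkeeping check (using $g(JX,JY)=g(X,Y)$ and $g(JX,Y)=-g(X,JY)$) verifies that the right-hand side of $(2)$ reduces to $R(X,Y,Z,U)$, so identity $(2)$ holds and $M\in AH_2$. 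The only care needed is to track the signs coming from $J$ being skew-symmetric with respect to $g$; this is routine, so I do not anticipate a genuine obstacle in part (ii).

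Alternatively, for (ii) one may simply note that a real space form has $R(X,Y,Z,U)=R(JX,JY,JZ,JU)$ (which follows by applying the space-form formula and Hermitian symmetry of $g$), hence $M\in AH_3$, and then observe that by the inclusion chain recorded in the preliminaries, $AH_1\subset AH_2\subset AH_3$, combined with the fact that constant curvature forces the stronger identity $(2)$ directly — so the cleanest route is the direct verification of $(2)$ sketched above rather than an appeal to the inclusions alone.
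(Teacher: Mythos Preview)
Your argument for part~(i) has a genuine gap. The ``Schur-type theorem'' you want to cite --- that an almost Hermitian manifold of dimension $2m\geq 6$ with pointwise constant anti-holomorphic sectional curvature is a real space form --- is false in that generality. Any complex space form of nonzero holomorphic sectional curvature $c$ (for instance $\mathbb{CP}^m$, $m\geq 3$) has anti-holomorphic sectional curvature identically $c/4$ but sectional curvature ranging between $c/4$ and $c$, so it is not a real space form. Thus no such lemma is available in \cite{Van,Vanh,Kas}, and after invoking Theorem~4.1 you have used up the information ``constant anti-holomorphic sectional curvature'' without reaching the conclusion.

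What the paper does that you are missing is to exploit the axiom of co-holomorphic $(2n+1)$-spheres a \emph{second} time, independently of Theorem~4.1. Choosing the co-holomorphic plane $\sigma$ to contain $X,JX,JY$ with $Y$ normal, the Codazzi equation for the totally umbilical submanifold yields $R(X,JX,JX,Y)=0$ for all orthonormal $X,Y$ with $g(X,JY)=0$. Via Lemmas~1 and~3 of \cite{Kas} this forces $M$ to be an $AH_3$-manifold with pointwise constant holomorphic sectional curvature. Only \emph{then}, combining this with the constant anti-holomorphic sectional curvature $\alpha$ from Theorem~4.1 and the structure results for $AH_3$-manifolds in \cite{Van} (Theorems~2,~4,~5), does one obtain $K(X,Y)=\alpha$ for all plane sections, after which the genuine Schur theorem applies. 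Your part~(ii) via direct verification of identity~$(2)$ for a real space form is fine once~(i) is repaired; the paper instead quotes Theorem~3 of \cite{Gan}, but your computation is a perfectly good substitute.
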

\begin{proof} Let $p$ be any point of $M$. Let $X$ and $Y$  be any orthonormal
vectors in $T_{p}(M)$ with $g(X,JY)=0.$  Consider the co-holomorphic
$(2n+1)$-plane $\sigma$ containing $X,JX,$ and $JY$ such that $Y$ is
normal to $\sigma$. By the axiom of co-holomorphic $(2n+1)$-spheres,
there exists a $(2n+1)$-dimensional totally umbilical submanifold
$N$ such that $p\in N$ and $T_{p}(N)=\sigma.$ Then, we have
\begin{equation}
\label{e1}
\begin{array}{c}
(R(X,JX)JX)^{\bot}=0
\end{array}.
\end{equation}
with the help of (2.1) and (2.3), from (2.2). Hence, we get
\begin{equation}
\label{e1}
\begin{array}{c}
R(X,JX,JX,Y)=0
\end{array},
\end{equation}
for all orthonormal vectors $X,Y\in T_{p}(M)$ with $g(X,JY)=0,$
since $Y$ is normal to $N$. It follows that $M$ is an
$AH_{3}$-manifold with pointwise constant holomorphic sectional
curvature using (4.12) and Lemma 1(\cite{Kas}) together with Lemma
3(\cite{Kas}). On the other hand, we have that $M$ has constant
anti-holomorphic sectional curvature $\alpha$ at $p,$ from  Theorem
4.1. In this case, we obtain that the constant holomorphic sectional
curvature $H$ of $M$ is $\alpha$ at $p$ from Theorem 5(\cite{Van}).
By using Theorem 4(\cite{Van})in Theorem 2(\cite{Van}), we obtain
\begin{equation}
\label{e1}
\begin{array}{c}
K(X,Y)=\alpha
\end{array},
\end{equation}
for all orthonormal vectors $X,Y\in T_{p}(M),$ where
$K(X,Y)=R(X,Y,X,Y)$ is sectional curvature. It is not difficult to
see that (4.13) is also true for all $X,Y\in T_{p}(M).$ By the
well-known Schur's theorem(\cite{Yan})
it follows that $M$ is a space of constant curvature $\alpha$ and $M$ has global constant type.\\
Now, we prove the part \textbf{ii)}. From the part \textbf{i)},
automatically, both holomorphic and anti-holomorphic sectional
curvature equal to $\alpha$. In which case, it follows from  Theorem
3(\cite{Gan}) that $M$ is an $AH_{2}$-manifold.
\end{proof}
\begin{remark}
Theorem 4.2 without part \textbf{ii)} was also obtained by O.T.
Kassabov in (\cite{Kass}) with different approach. It is a
generalization of Theorem 1(\cite{Vanh}) concerning
$AH_{3}$-manifolds.
\end{remark}
By Theorem 4.1, Theorem 4.2 and Theorem 5(\cite{Van}), we have the
following result which is a generalization of Corollary
1(\cite{Vanh}) concerning  $AH_{3}$-manifolds.
\begin{corollary} Let $M$ be a $2m$-dimensional almost Hermitian manifold with
vanishing constant type and $m\geq3.$ If $M$ satisfies the axiom of
co-holomorphic $(2n+1)$-spheres for some $n$, then $M$ is a flat
$AH_{2}$-manifold.
\end{corollary}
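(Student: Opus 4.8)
The plan is to specialize Theorem 4.2 to the case $\alpha=0$ and to read off the two conclusions. First I would note that, by Definition 2.1, a manifold of \emph{vanishing constant type} is precisely one of (global, hence a fortiori pointwise) constant type whose constant value is $\alpha=0$; that is, $\lambda(X,Y)=0$ whenever $X,Y$ span an anti-holomorphic plane. Consequently the hypotheses of Theorem 4.2 are satisfied with $\alpha=0$: $M$ is a $2m$-dimensional almost Hermitian manifold with pointwise constant type $0$, $m\geq3$, and $M$ satisfies the axiom of co-holomorphic $(2n+1)$-spheres for some $n$. (Implicit here is Theorem 4.1, which converts the hypothesis on constant type into the statement that $M$ has pointwise constant anti-holomorphic sectional curvature $0$, the form in which it feeds into Theorem 4.2.)

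Next I would apply Theorem 4.2(i), which gives that $M$ is a space of constant sectional curvature $\alpha=0$; since $M$ is connected, a space of constant sectional curvature $0$ is flat, and the same part also yields that $M$ has global constant type $0$. Then I would apply Theorem 4.2(ii), which gives directly that $M$ is an $AH_{2}$-manifold. Combining these two conclusions shows $M$ is a flat $AH_{2}$-manifold, which is the assertion.

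As an internal consistency check — and this is the point at which Theorem 5(\cite{Van}) enters, namely through the proof of Theorem 4.2 — one sees that both the holomorphic and the anti-holomorphic sectional curvatures of $M$ are identically $0$, in agreement with flatness. Since every step is a direct invocation of an already-established result, there is no genuine obstacle in this proof; the only point that warrants an explicit line is the identification of ``vanishing constant type'' with ``pointwise constant type $\alpha=0$,'' which is immediate from Definition 2.1.
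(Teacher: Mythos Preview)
Your proposal is correct and follows essentially the same route as the paper: the corollary is just the specialization of Theorem~4.2 to $\alpha=0$, with Theorem~4.1 and Theorem~5(\cite{Van}) already absorbed in the proof of Theorem~4.2. The only minor wording issue is your parenthetical claim that Theorem~4.1 is needed to ``feed into Theorem~4.2''---in fact Theorem~4.2 takes pointwise constant type as its hypothesis directly, and Theorem~4.1 is invoked inside its proof; but this does not affect the validity of your argument.
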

We end this paper by giving a result related to the Bochner
curvature tensor of a K\"{a}hlerian manifold satisfying the axiom of
co-holomorphic $(2n+1)$-spheres.
\begin{theorem}
Let $M$ be a K\"{a}hlerian manifold of dimension $2m\geq6.$ If $M$
satisfies the axiom of co-holomorphic $(2n+1)$-spheres for some $n$,
then $M$ has a vanishing Bochner curvature tensor.
\end{theorem}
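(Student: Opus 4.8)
The plan is to show that, under these hypotheses, $M$ is in fact flat, and then to read off the vanishing of $B$ directly from its definition. First, recall from Subsection 2.1 that a K\"ahlerian manifold is an $AH_1$-manifold and that every $AH_1$-manifold has global vanishing constant type; in particular $M$ has pointwise constant type with $\alpha=0$. Since $2m\geq 6$ gives $m\geq 3$, part i) of Theorem 4.2 (equivalently Corollary 4.1, noting that a K\"ahlerian manifold is automatically $AH_2$) applies with $\alpha=0$ and tells us that $M$ is a flat $AH_2$-manifold.

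Once flatness is established, I would simply substitute into the defining formula for the Bochner curvature tensor. On a flat manifold the Riemannian curvature tensor $R$, the Ricci tensor $\varrho$ and the scalar curvature $\tau$ all vanish identically, hence the auxiliary tensor $L=\frac{\varrho}{2(m+2)}-\frac{\tau}{8(m+1)(m+2)}g$ is zero as well. Plugging $R=0$ and $L=0$ into the expression for $B(X,Y,Z,U)$ given in Subsection 2.1, each of its summands vanishes, so $B\equiv 0$ on $M$.

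If one prefers a route that does not pass through the full strength of Theorem 4.2, one can instead argue as follows. Lemma 4.1 gives $K(X,Y)=\lambda(X,Y)$ for every anti-holomorphic plane $\mathrm{span}\{X,Y\}$; since $M\in AH_1$ it has vanishing constant type, so $\lambda\equiv 0$ and hence $M$ has pointwise constant anti-holomorphic sectional curvature $0$. A K\"ahlerian manifold with pointwise constant totally real sectional curvature has pointwise constant holomorphic sectional curvature, and a Schur-type argument then exhibits $M$ as a complex space form; because its holomorphic sectional curvature equals $0$, this complex space form is flat, and a flat (indeed any) complex space form is well known (\cite{Yan}) to be Bochner-flat.

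There is no genuine analytic content beyond Theorems 4.1--4.2: the only points requiring care are matching their hypotheses to the K\"ahlerian setting (almost Hermitian, $m\geq 3$, pointwise constant type $\alpha=0$) and noting that every term of $B$ is assembled solely from $R$, $\varrho$, $\tau$ and $g$, so that flatness annihilates all of them. The "main obstacle", such as it is, is purely bookkeeping --- keeping track of the constants in $L$ and of which cited result supplies the passage from constant anti-holomorphic to constant holomorphic sectional curvature.
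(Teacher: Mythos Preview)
Your argument is correct, but it is not the route the paper takes. The paper argues directly from the axiom: given orthonormal $X,Y,Z$ spanning an anti-holomorphic $3$-plane, it chooses a co-holomorphic $(2n+1)$-plane $\sigma$ containing $X,JX,Y$ with $Z$ normal to $\sigma$, and from the Codazzi equation for the associated totally umbilical submanifold it extracts the identities $R(X,JX,Y,Z)=0$ and $R(X,Y,JX,Z)=0$; a lemma of Kassabov (\cite{Kassa}) then yields $B\equiv 0$. No appeal to Theorem~4.2, Corollary~4.1, or to flatness is made.

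Your route is shorter and in fact proves more: since a K\"ahlerian manifold is $AH_1$ and hence has vanishing constant type, Corollary~4.1 already forces $M$ to be flat, and $B\equiv 0$ drops out of the defining formula. This shows that, as stated, Theorem~4.3 is strictly weaker than what Corollary~4.1 already delivers in the K\"ahlerian case. What the paper's proof buys is independence from the long chain of external results (\cite{Van}, \cite{Gan}, \cite{Kas}) invoked in the proof of Theorem~4.2; it reaches $B\equiv 0$ by a self-contained curvature computation together with a single lemma of Kassabov, and it does not pass through the much stronger intermediate claim that $M$ is flat. Your second, ``alternative'' paragraph essentially re-derives the content of Theorem~4.2 and adds nothing beyond the first.
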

\begin{proof} Let $p$ be any point of $M$. Let $X,Y,$ and $Z$  be any unit vectors
of $T_{p}(M)$, which span an anti-holomorphic 3-plane, that is,
$g(X,Y)=g(X,Z)=g(Y,Z)=0,$ and $g(X,JY)=g(X,JZ)=g(Y,JZ)=0.$ Consider
the co-holomorphic $(2n+1)$-plane $\sigma$ containing $X,JX,$ and
$Y$ such that $Z$ is normal to $\sigma$. By the axiom of
co-holomorphic $(2n+1)$-spheres,  there exists a
$(2n+1)$-dimensional totally umbilical submanifold $N$ such that
$p\in N$ and $T_{p}(N)=\sigma.$ Then, we have
\begin{equation}
\label{e1}
\begin{array}{c}
(R(X,JX)Y)^{\bot}=0
\end{array},
\end{equation}
and
\begin{equation}
\label{e1}
\begin{array}{c}
(R(X,Y)JX)^{\bot}=0
\end{array}.
\end{equation}
 with the help of (2.1) and
(2.3) from (2.2). For all unit vectors $X,Y,Z\in T_{p}(M)$, which
span an anti-holomorphic 3-plane, we respectively get
\begin{equation}
\label{e1}
\begin{array}{c}
R(X,JX,Y,Z)=0
\end{array}
\end{equation}
and
\begin{equation}
\label{e1}
\begin{array}{c}
R(X,Y,JX,Z)=0
\end{array},
\end{equation}
from (4.14) and (4.15), since $Z$ is normal to $N$. Thus, our
assertion follows from (4.16), (4.17) and Lemma(\cite{Kassa}).
\end{proof}

\end{document}